\documentclass[12pt,reqno]{amsart}

\usepackage{mathtools}
\mathtoolsset{showonlyrefs}
\numberwithin{equation}{section}
\usepackage{amssymb}
\usepackage{amsthm}
\usepackage{longtable}
\usepackage{array}
\usepackage{ragged2e}
\usepackage{booktabs}
\usepackage{microtype}
\usepackage[hidelinks]{hyperref}

\theoremstyle{plain}
\newtheorem{lemma}{Lemma}[section]
\newtheorem{prop}[lemma]{Proposition}
\newtheorem{thm}[lemma]{Theorem}

\newtheorem{rem}[lemma]{Remark}

\newcommand{\Fp}{\mathbb{F}_p}
\newcommand{\Fpsq}{\mathbb{F}_{p^2}}

\newcommand{\Dp}{D_p}
\newcolumntype{P}[1]{>{\RaggedRight\arraybackslash}p{#1}}

\begin{document}

\title[modulo eight hypergeometric polynomials]{The modulo eight behavior of common factors of two hypergeometric polynomials arising from a genus-three hyperelliptic family}
\author{Youhei MORITA}

\address{Graduate School of Information Science and Technology,
The University of Osaka, 1-5 Yamadaoka, Suita, Osaka 565-0871, Japan}
\email{morita.youhei@ist.osaka-u.ac.jp}

\subjclass[2020]{Primary 14H45; Secondary 14G17}
\keywords{hypergeometric polynomial, hyperelliptic curve, Cartier--Manin matrix, superspecial curve}

\begin{abstract}
We study a one-parameter family of smooth projective genus-three hyperelliptic curves over algebraically closed fields of odd characteristic, with the parameter different from zero and one.
We translate superspeciality into the simultaneous vanishing of two truncated hypergeometric polynomials arising from the Cartier operator.  For their monic greatest common divisor,
we prove squarefreeness and show that every irreducible factor has degree at most two.
We then determine its behavior according to the characteristic modulo eight:
in one residue class every nonconstant irreducible factor is quadratic;
in two residue classes the greatest common divisor is trivial;
and in the remaining residue class a distinguished linear factor always occurs.
\end{abstract}

\maketitle
\tableofcontents

\section{Introduction}\label{sec:introduction}

Let $p>2$ be a prime, and let $k$ be an algebraically closed field of characteristic $p$. We consider the smooth projective genus-three hyperelliptic curve
\begin{align*}
 C_\alpha : y^2=(x^4-1)(x^4-\alpha), \qquad \alpha\in k\setminus\{0,1\}.
\end{align*}
We call a curve superspecial if its Jacobian is isomorphic over $k$ to a product of supersingular elliptic curves.
Our aim is to study the superspecial members of this family through an explicit Cartier--Manin calculation and to extract arithmetic information from the resulting polynomial conditions.

We put
\begin{align*}
 n:=\frac{p-1}{2},\qquad s:=\left\lfloor\frac{p}{4}\right\rfloor,
\end{align*}
and define
\begin{align*}
 \Phi_p(x)&:=\sum_{i=0}^{n}\binom{n}{i}^{2}x^i,\\
 G_p(x)&:=\sum_{i=0}^{s}\binom{n}{i}\binom{n}{s-i}x^i.
\end{align*}
We regard these polynomials as elements of $\Fp[x]$ by reducing their coefficients modulo $p$.
These are truncated hypergeometric polynomials. More precisely,
\begin{align*}
 \Phi_p(x)&={}_2F_1(-n,-n;1;x),\\
 G_p(x)&=\binom{n}{s}\,{}_2F_1(-n,-s;n-s+1;x).
\end{align*}
For $\lambda\in k\setminus\{0,1\}$, let
\begin{align*}
 E_\lambda:\quad y^2=x(x-1)(x-\lambda)
\end{align*}
denote the Legendre elliptic curve. 
Up to the nonzero scalar $(-1)^n$, the polynomial $\Phi_p$ is the classical Deuring polynomial for this family, and its roots are precisely the supersingular Legendre parameters (see \cite{Igusa}, \cite{AuerTop}).
For background on supersingular and superspecial loci (see \cite{Oort}, \cite{KatsuraOort}, \cite{IbukiyamaKatsuraOort}, \cite{GeerVlugt}), related explicit investigations of low-genus hyperelliptic families appear in \cite{Ohashi}.

Ohashi and Kudo \cite{OhashiKudo} gave hypergeometric expressions for the Cartier--Manin coefficients of the more general family $y^2=(x^r-1)(x^r-\lambda)$.
The calculation in Section~\ref{sec:superspecial-criterion} specializes that framework to $r=4$ and isolates the two polynomials whose common zeros describe the superspecial locus of the present family.
Our congruence argument uses the more specialized standard-form statement in \cite[Lemma~4.2]{Ohashi}.

We define $D_p(x)$ to be the \emph{monic} greatest common divisor
\begin{align*}
 D_p(x):=\gcd\bigl(\Phi_p(x),G_p(x)\bigr)\in\Fp[x].
\end{align*}
The next proposition identifies superspecial members of the family with the common zeros of $\Phi_p$ and $G_p$. 
This geometric interpretation will play a central role throughout the paper and will be used repeatedly in the proof of the main theorem (see Proposition~\ref{prop:root-locus}).

\begin{prop}\label{prop:superspecial-equivalence}
For $\alpha\in k\setminus\{0,1\}$, the following conditions are equivalent:
\begin{enumerate}
 \item $C_\alpha$ is superspecial.
 \item $\Phi_p(\alpha)=G_p(\alpha)=0$.
\end{enumerate}
\end{prop}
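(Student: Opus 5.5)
The plan is to compute the Cartier--Manin matrix of $C_\alpha$ explicitly and then use the standard criterion (Nygaard; see also \cite{Oort}) that a curve is superspecial exactly when its Cartier operator on $H^0(C,\Omega^1)$ vanishes. For $y^2=f(x)$ of genus $g$, this happens exactly when the Cartier--Manin matrix is zero. That matrix is $(c_{ip-j})_{1\le i,j\le g}$, where $c_m$ denotes the coefficient of $x^m$ in $f(x)^n$. Entries may carry $p$-th power or $p^{-1}$-th power Frobenius twists depending on the convention, but this does not affect when they vanish. Here $g=3$ and $f(x)=(x^4-1)(x^4-\alpha)$, so we must decide when all nine coefficients $c_{ip-j}$, $1\le i,j\le 3$, vanish. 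This is the $r=4$ specialization of the computation in \cite{OhashiKudo}, but I would redo it directly since it is short.

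First expand
\begin{align*}
 f(x)^n=\sum_{a,b=0}^{n}\binom{n}{a}\binom{n}{b}(-1)^{n-a}(-\alpha)^{n-b}x^{4(a+b)} .
\end{align*}
Only exponents divisible by $4$ occur, so $c_{ip-j}=0$ unless $ip\equiv j\pmod 4$. The coefficient of $x^{4m}$ equals, up to the sign $(-1)^n$,
\begin{align*}
 \sum_{a+b=m}(-1)^{a+b}\binom{n}{a}\binom{n}{b}\alpha^{n-b}.
\end{align*}
Up to a nonzero factor $\pm\alpha^{e}$, this is $\sum_{b}\binom{n}{m-b}\binom{n}{b}\alpha^{m-b}$ or its reversal. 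Since $\alpha\neq0$, such power factors never matter.

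Next I would split into the cases $p\equiv1$ and $p\equiv3\pmod 4$.
\begin{itemize}
\item If $p=4s+1$, then $n=2s$ and the surviving entries are the diagonal ones $i=j$. The index $4m=ip-i$ gives $m=is$, that is $m=s,\,2s=n,\,3s$.
\item If $p=4s+3$, then $n=2s+1$ and the surviving entries are $(i,j)=(1,3),(2,2),(3,1)$. These give $m=s,\;n,\;3s+2$.
\end{itemize}
In both cases the middle entry $m=n$ is $\sum_a\binom{n}{a}\binom{n}{n-a}\alpha^{a}=\Phi_p(\alpha)$ up to a unit. The entry with $m=s$ is $\sum_i\binom{n}{i}\binom{n}{s-i}\alpha^{i}$ up to a unit, which is $G_p(\alpha)$, possibly reversed. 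For the third entry, substitute $a'=n-a$ and $b'=n-b$. Then $a'+b'=2n-m$, and $2n-m=s$ in both cases ($4s-3s$, resp.\ $4s+2-3s-2$). So this entry is $\alpha^{e}\,\alpha^{s}G_p(1/\alpha)$ up to sign.

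Since $\binom{n}{i}\binom{n}{s-i}$ is symmetric under $i\mapsto s-i$, $G_p$ is palindromic of degree $s$. Hence the reversed polynomial is again $G_p$, and the first and third entries vanish simultaneously exactly when $G_p(\alpha)=0$. Therefore the Cartier--Manin matrix is zero if and only if $\Phi_p(\alpha)=G_p(\alpha)=0$, which proves the equivalence.

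I expect the main obstacle to be bookkeeping rather than ideas. One must pin down the exact index convention and Frobenius twists of the Cartier--Manin matrix, and track the signs and powers of $\alpha$ carefully enough to see that each nonzero entry is a unit multiple of $\Phi_p(\alpha)$ or $G_p(\alpha)$. The key structural facts are the congruence $2n-3s-\epsilon\equiv s$, which forces the third entry to reduce to $G_p$, and the palindromy of $G_p$. I would verify the cases $p=5$ and $p=7$ by hand as a check.
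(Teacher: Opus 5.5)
Your proposal is correct and follows essentially the same route as the paper: the Nygaard criterion, the observation that only exponents divisible by $4$ occur so that the Cartier--Manin matrix is diagonal or anti-diagonal according to $p \bmod 4$ with entries indexed by $m=s,\,n,\,2n-s$, and the identification of these entries with $\Phi_p(\alpha)$ and $G_p(\alpha)$ up to units. The only cosmetic difference is how the first and third entries are linked: the paper uses the reciprocity $A_m(\alpha)=\alpha^{n-m}A_{2n-m}(\alpha)$, derived from $z^2g_T(T/z)=Tg_T(z)$, while you use the substitution $(a,b)\mapsto(n-a,n-b)$ together with the palindromy of $G_p$, which expresses the same symmetry.
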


The proof is given at the end of Section~\ref{sec:superspecial-criterion}, after the Cartier--Manin calculation has been solved.

The following theorem summarizes the main arithmetic consequences established in this paper.

\begin{thm}\label{thm:main}
Let $p>2$ be a prime. Then:
\begin{enumerate}
 \item $D_p$ is squarefree, and every nonconstant irreducible factor of $D_p$ over $\Fp$ has degree at most $2$.
 \item If $p\equiv1\pmod8$, every nonconstant irreducible factor of $D_p$ has degree exactly $2$.
 \item If $p\equiv3$ or $5\pmod8$, then $D_p(x)=1$.
 \item If $p\equiv7\pmod8$, then $x+1$ divides $D_p(x)$.
\end{enumerate}
\end{thm}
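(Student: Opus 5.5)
The plan is to get part~(1) from the Deuring polynomial alone, and parts~(2)--(4) from the symmetries of $\Phi_p$ and $G_p$ together with Proposition~\ref{prop:superspecial-equivalence}.

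\textbf{Part (1).} Since $D_p\mid \Phi_p$, it suffices to know two classical facts about $\Phi_p$, which up to $(-1)^n$ is the Deuring polynomial of the Legendre family:
\begin{itemize}
\item it is squarefree (Igusa);
\item all its roots, being supersingular Legendre parameters, lie in $\Fpsq$ (see \cite{AuerTop}).
\end{itemize}
Hence $D_p$ is squarefree, and every irreducible factor of $D_p$ over $\Fp$ divides $x^{p^2}-x$, so it has degree $1$ or $2$. Nothing beyond citation is needed here.

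\textbf{Symmetries.} Next I record the reciprocity of both polynomials. The substitution $i\mapsto n-i$ gives $x^n\Phi_p(1/x)=\Phi_p(x)$, and $i\mapsto s-i$ gives $x^sG_p(1/x)=G_p(x)$. Evaluating at $x=-1$ yields
\begin{align*}
\Phi_p(-1)=(-1)^n\Phi_p(-1),\qquad G_p(-1)=(-1)^sG_p(-1).
\end{align*}

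\textbf{Part (4).} If $p\equiv7\pmod8$, write $p=8m+7$. Then $n=4m+3$ and $s=2m+1$ are both odd, so both values above vanish in $\Fp$. Therefore $x+1$ divides $D_p$. Geometrically this corresponds to $C_{-1}:y^2=x^8-1$, a quotient of the Fermat curve of degree $8$, which is superspecial exactly when $p\equiv-1\pmod 8$. This gives a consistency check via Proposition~\ref{prop:superspecial-equivalence}.

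\textbf{Parts (2) and (3).} These are the real content, and I expect them to be the main obstacle. The strategy is to use Proposition~\ref{prop:superspecial-equivalence} to turn a root $\alpha$ of $D_p$ into a superspecial curve $C_\alpha$, and then exploit its extra automorphisms:
\begin{itemize}
\item $x\mapsto ix$, of order $4$;
\item $x\mapsto \alpha^{1/4}/x$.
\end{itemize}
The quotients by these automorphisms split $\mathrm{Jac}(C_\alpha)$ up to isogeny into elliptic curves. One of them is $y^2=(u^2-1)(u^2-\alpha)$ with $u=x^2$; the others are related Legendre-type curves in parameters such as $\pm\sqrt\alpha$ or $(1\pm\alpha^{1/2})^2/(\ldots)$.

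Superspeciality forces all factors to be supersingular, and moreover their Frobenius actions must be compatible. I will then analyse the field of definition:
\begin{itemize}
\item If $\alpha\in\Fp$, the curve is defined over $\Fp$ and its $p$-Frobenius squares to $\pm p$.
\item The automorphism $x\mapsto ix$ is $\Fp$-rational iff $p\equiv1\pmod4$, and $\sqrt{-1}$, $\sqrt{2}$ and $\sqrt{\pm2}$ govern the rationality of the other automorphisms. This is where the class of $p$ modulo $8$ enters.
\item Commutation of Frobenius with a rational automorphism of order $4$ acting by $i$ on differentials should contradict Frobenius acting as $\pm\sqrt{-p}$ on some eigenspace.
\end{itemize}
This argument should exclude linear factors when $p\equiv1\pmod8$, and exclude all roots when $p\equiv3,5\pmod8$.

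\textbf{Fallback for (2) and (3).} If the geometric argument resists, I would argue purely arithmetically using \cite[Lemma~4.2]{Ohashi}. The idea is to rewrite the Cartier--Manin entries in standard form and derive a congruence relating $G_p(\alpha)$ to $\Phi_p$ evaluated at a related argument (e.g.\ $\sqrt\alpha$ or $-\alpha$) times a power of $\alpha$. The conditions $\Phi_p(\alpha)=G_p(\alpha)=0$ would then force $\alpha$ to satisfy $\alpha^{(p-1)/2}=\pm1$ or $\alpha^{(p+1)/2}=\pm1$ with a sign dictated by $\left(\frac{2}{p}\right)$ and $\left(\frac{-1}{p}\right)$. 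Comparing this sign with the one required for $\alpha\in\Fp$ versus $\alpha\in\Fpsq\setminus\Fp$ would give the case split modulo $8$.
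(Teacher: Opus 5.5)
\textbf{Parts (1) and (4) are fine; Parts (2) and (3) have genuine gaps.}

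Your Parts (1) and (4) are correct and are essentially the paper's argument. For (4) you use the reciprocities $x^n\Phi_p(1/x)=\Phi_p(x)$ and $x^sG_p(1/x)=G_p(x)$ at $x=-1$, whereas the paper reads $\Phi_p(-1)$ and $G_p(-1)$ off as coefficients of $(1-z^2)^n$; either works.

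Parts (2) and (3), however, are not proved. Your automorphism/Frobenius sketch stops with ``should contradict'' at exactly the decisive step. You never say which eigenspace is meant, why Frobenius acts on it by $\pm\sqrt{-p}$, or why this clashes with the order-$4$ automorphism. The fallback likewise does not state, let alone derive, the congruence it hopes to obtain.

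For (2) you are also looking in the wrong place. Since $D_p\mid\Phi_p$, nothing about $C_\alpha$ or $G_p$ is needed. You only need that $\Phi_p$ has no root in $\Fp$ when $p\equiv1\pmod4$, which the paper takes from \cite[Proposition~3.2(a)]{AuerTop}. The reason is short. Suppose $\lambda\in\Fp$ were a supersingular Legendre parameter with $p\geq5$. The trace of Frobenius of $E_\lambda/\Fp$ is divisible by $p$ and bounded by $2\sqrt p$, hence $0$, so $\#E_\lambda(\Fp)=p+1$. But the three $2$-torsion points $(0,0),(1,0),(\lambda,0)$ are rational, so $4\mid p+1$, i.e.\ $p\equiv3\pmod4$.

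For (3) the gap is structural. When $p\equiv5\pmod8$, the fact just quoted shows that $\Phi_p$ has no $\Fp$-rational roots at all. Every candidate $\alpha$ then lies in $\Fpsq\setminus\Fp$, where your analysis of $\Fp$-rational automorphisms and $\Fp$-Frobenius does not apply. When $p\equiv3\pmod8$, the automorphism $x\mapsto ix$ on which your contradiction rests is not $\Fp$-rational. Moreover, $\Phi_p$ can have roots in $\Fpsq\setminus\Fp$ (already for $p=11$), and nothing you say excludes those.

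The missing idea is to compare two power conditions inside the cyclic group $\Fpsq^\times$:
\begin{enumerate}
\item From $\Phi_p(\alpha)=0$ alone, \cite[Propositions~2.2 and~3.1]{AuerTop} give $-\alpha=u^8$ with $u\in\Fpsq^\times$.
\item Take $\varepsilon^4=-1$ and $\beta=\varepsilon u^2$. Then $\beta\in\Fpsq$ and $\beta^4=\alpha$.
\item The substitution $x=\gamma X$, $y=\beta^2Y$ with $\gamma^2=\beta$ identifies $C_\alpha$ with Ohashi's standard form $Y^2=(X^4-aX^2+1)(X^4+aX^2+1)$, where $a=\beta+\beta^{-1}$. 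The nondegeneracy hypotheses hold because $\alpha\neq1$.
\item Superspeciality, which is where $G_p(\alpha)=0$ enters, makes $a+2=(\beta+1)^2/\beta$ a square in $\Fpsq$ by \cite[Lemma~4.2]{Ohashi}. Hence $\beta=v^2$ and $\alpha=v^8$.
\item Then $(u/v)^8=-1$, so $\Fpsq^\times$ contains an element of order $16$. Thus $16\mid p^2-1$, which for odd $p$ means $p\equiv\pm1\pmod8$.
\end{enumerate}

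Your fallback names the right lemma. But you propose to extract from it a relation between $G_p(\alpha)$ and $\Phi_p$ at another argument, and conditions like $\alpha^{(p\mp1)/2}=\pm1$; that lemma provides neither. What it provides is the square condition on $a+2$. Combining that with the Auer--Top eighth-power condition on $-\alpha$ is the step your proposal is missing.
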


The remainder of the paper is organized as follows.
Section~\ref{sec:superspecial-criterion} proves Proposition~\ref{prop:superspecial-equivalence} by translating superspeciality into the common-zero condition for $\Phi_p$ and $G_p$.
Section~\ref{sec:main-theorem} identifies the roots of $D_p$ with superspecial parameters, proves the mod eight obstruction for such parameters, and derives all four parts of Theorem~\ref{thm:main}.
Appendix~\ref{app:computations} contains the factorization table for odd primes below $1000$.

\section{Superspeciality and common zeros}\label{sec:superspecial-criterion}

The purpose of this section is to establish the geometric--polynomial bridge used throughout the paper:
\begin{align}\label{eq:superspecial-common-zero}
 C_\alpha\text{ is superspecial}
 \quad\Longleftrightarrow\quad
 \Phi_p(\alpha)=G_p(\alpha)=0.
\end{align}
Set
\begin{align*}
 f_\alpha(x):=(x^4-1)(x^4-\alpha)=x^8-(1+\alpha)x^4+\alpha,
\end{align*}
and define the coefficients $c_t(\alpha)$ by
\begin{align*}
 f_\alpha(x)^n=\sum_{t\geq0}c_t(\alpha)x^t.
\end{align*}
Superspeciality is equivalent to the vanishing of the Cartier operator.
It remains to express that vanishing condition in terms of the coefficients of $f_\alpha(x)^n$ and then to identify the relevant coefficients with evaluations of $\Phi_p$ and $G_p$.
Although the parity of $n$ changes the positions of the nonzero entries in the Cartier--Manin matrix, the final criterion is uniform for every odd prime $p$.

Let $\tau:k\to k$ denote the inverse of the Frobenius automorphism.
The differentials
\begin{align*}
 \omega_1=\frac{dx}{y},\qquad
 \omega_2=\frac{x\,dx}{y},\qquad
 \omega_3=\frac{x^2\,dx}{y}
\end{align*}
form an ordered basis of $H^0(C_\alpha,\Omega^1_{C_\alpha})$.
Via this basis, we identify $H^0(C_\alpha,\Omega^1_{C_\alpha})$ with the space $k^3$ of column vectors.
With this convention, the Cartier operator is represented by
\begin{align*}
 B(\alpha):=\bigl(\tau(c_{ip-j}(\alpha))\bigr)_{1\leq i,j\leq3}
\end{align*}
(see \cite[Section~3.1]{AchterHowe}).  Consequently,
\begin{align*}
 B(\alpha)=0
 \quad\Longleftrightarrow\quad
 M(\alpha):=\bigl(c_{ip-j}(\alpha)\bigr)_{1\leq i,j\leq3}=0.
\end{align*}
We work with $M(\alpha)$ below, since only its vanishing is relevant.

Let $T$ be an indeterminate, and set
\begin{align*}
 g_T(z):=(z-1)(z-T)=z^2-(1+T)z+T.
\end{align*}
For $0\leq m\leq 2n$, define the coefficient polynomial
$A_m(T)\in\Fp[T]$ by the identity
\begin{align}\label{eq:def-Am}
 g_T(z)^n=\sum_{m=0}^{2n}A_m(T)z^m
 \qquad\text{in }\Fp[T,z].
\end{align}
For $\alpha\in k$, we denote by $A_m(\alpha)$ the value of
$A_m(T)$ at $T=\alpha$. Thus, setting
\begin{align*}
 g_\alpha(z):=g_T(z)\big|_{T=\alpha}
 =(z-1)(z-\alpha),
\end{align*}
we obtain
\begin{align*}
 g_\alpha(z)^n
 =\sum_{m=0}^{2n}A_m(\alpha)z^m.
\end{align*}
Since $f_\alpha(x)=g_\alpha(x^4)$, one has
\begin{align*}
 c_{4m}(\alpha)=A_m(\alpha),\qquad
 c_t(\alpha)=0\quad\text{if }4\nmid t.
\end{align*}

\begin{lemma}\label{lem:reciprocity-A}
Regard the coefficients $A_m$ as polynomials in an indeterminate $T$. 
For
$0\leq m\leq2n$, one has
\begin{align*}
 T^mA_m(T)=T^nA_{2n-m}(T)
 \qquad\text{in }\Fp[T].
\end{align*}
Consequently, for every $a\in k^\times$,
\begin{align*}
 A_m(a)=a^{n-m}A_{2n-m}(a).
\end{align*}
\end{lemma}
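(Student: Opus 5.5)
The plan is to use the self-reciprocity of $g_T(z)=(z-1)(z-T)$ under the substitution $z\mapsto T/z$. Working in the Laurent ring $\Fp[T,T^{-1}][z,z^{-1}]$, or equivalently clearing denominators, one checks directly that
\begin{align*}
 z^2\,g_T(T/z)=(T-z)(T-Tz)=T(z-T)(z-1)=T\,g_T(z).
\end{align*}
Hence $z^{2n}g_T(T/z)^n=T^n g_T(z)^n$ in $\Fp[T,z]$; here the left side is a genuine polynomial, because each factor $z^2g_T(T/z)$ is.

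Next we expand both sides using the definition~\eqref{eq:def-Am}. The left side becomes
\begin{align*}
 z^{2n}\sum_{m=0}^{2n}A_m(T)T^mz^{-m}=\sum_{m=0}^{2n}T^mA_m(T)z^{2n-m},
\end{align*}
and the right side is $\sum_{m'}T^nA_{m'}(T)z^{m'}$. We then compare coefficients of $z^{2n-m}$. This is legitimate because both sides lie in $\Fp[T][z]$, where coefficients are uniquely determined. The comparison gives $T^mA_m(T)=T^nA_{2n-m}(T)$ for $0\le m\le 2n$, which is the identity in $\Fp[T]$.

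For the specialization, we evaluate at $T=a\in k^\times$ through the ring map $\Fp[T]\to k$. This yields $a^mA_m(a)=a^nA_{2n-m}(a)$. Since $a\neq0$, dividing by $a^m$ gives $A_m(a)=a^{n-m}A_{2n-m}(a)$, where the exponent $n-m$ may be negative but is meaningful because $a$ is invertible.

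There is no real obstacle in this argument. The only point needing care is keeping the identity polynomial: we multiply through by $z^{2n}$ before comparing coefficients, so that no division by $T$ or $z$ occurs in $\Fp[T,z]$.
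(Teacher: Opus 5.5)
Your proof is correct and follows the paper's argument: you use the same reciprocity $z^2g_T(T/z)=Tg_T(z)$, raise it to the $n$th power, and compare coefficients. The only difference is that you read off the coefficient of $z^{2n-m}$ rather than $z^m$. This gives $T^mA_m(T)=T^nA_{2n-m}(T)$ directly and avoids the paper's case split into $m\le n$ (cancel $T^{n-m}$) and $m\ge n$ (multiply by $T^{m-n}$).
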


\begin{proof}
Set $g_T(z):=(z-1)(z-T)$.  By the definition of the coefficient polynomials $A_m(T)$, one has
$g_T(z)^n=\sum_{m=0}^{2n}A_m(T)z^m$.  The identity
\begin{align*}
 z^2g_T(T/z)=Tg_T(z)
\end{align*}
implies
\begin{align*}
 z^{2n}g_T(T/z)^n=T^ng_T(z)^n.
\end{align*}
Comparing the coefficient of $z^m$ gives
\begin{align*}
 T^nA_m(T)=T^{2n-m}A_{2n-m}(T).
\end{align*}
If $m\leq n$, cancellation of $T^{n-m}$ in the integral domain $\Fp[T]$ gives the claimed identity.
If $m\geq n$, multiplication by $T^{m-n}$ gives the same identity. Evaluation at $T=a\neq0$ yields the second assertion.
\end{proof}

\begin{lemma}\label{lem:matrix-reduction}
For $\alpha\in k^\times$, the Cartier--Manin matrix satisfies
\begin{align*}
 M(\alpha)=0
 \quad\Longleftrightarrow\quad
 A_s(\alpha)=A_n(\alpha)=0.
\end{align*}
\end{lemma}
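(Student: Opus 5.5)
The plan is to compute directly which entries of $M(\alpha)=\bigl(c_{ip-j}(\alpha)\bigr)_{1\leq i,j\leq3}$ can be nonzero, using the facts recorded just before Lemma~\ref{lem:reciprocity-A}: $c_t(\alpha)=0$ unless $4\mid t$, and $c_{4m}(\alpha)=A_m(\alpha)$. For each pair $(i,j)$ with $1\leq i,j\leq 3$, I will decide whether $4\mid ip-j$. When it does, I will identify $m=(ip-j)/4$ in terms of $n$ and $s$. I will then use Lemma~\ref{lem:reciprocity-A} to collapse the surviving entries to the two quantities $A_s(\alpha)$ and $A_n(\alpha)$. The computation splits according to $p\bmod 4$, which also explains the remark that the parity of $n$ moves the nonzero entries but leaves the final criterion unchanged.

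\emph{Case $p\equiv1\pmod4$.} Here $p=4s+1$ and $n=2s$. Since $ip-j\equiv i-j\pmod 4$ and $|i-j|\leq2$, divisibility by $4$ holds exactly on the diagonal. The three diagonal indices are:
\begin{align*}
 \tfrac{p-1}{4}=s,\qquad \tfrac{2p-2}{4}=2s=n,\qquad \tfrac{3p-3}{4}=3s=2n-s.
\end{align*}
So $M(\alpha)=\operatorname{diag}\bigl(A_s(\alpha),A_n(\alpha),A_{2n-s}(\alpha)\bigr)$.

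\emph{Case $p\equiv3\pmod4$.} Here $p=4s+3$ and $n=2s+1$. Now $ip-j\equiv -(i+j)\pmod 4$, so the surviving entries are those with $i+j=4$, namely the antidiagonal $(1,3),(2,2),(3,1)$. Their indices are:
\begin{align*}
 \tfrac{p-3}{4}=s,\qquad \tfrac{2p-2}{4}=2s+1=n,\qquad \tfrac{3p-1}{4}=3s+2=2n-s.
\end{align*}
So the nonzero entries of $M(\alpha)$ are again $A_s(\alpha)$, $A_n(\alpha)$ and $A_{2n-s}(\alpha)$, now placed on the antidiagonal. In both cases all these indices lie in $[0,2n]$, so they are legitimate coefficient indices.

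In either case, $M(\alpha)=0$ is equivalent to the simultaneous vanishing of $A_s(\alpha)$, $A_n(\alpha)$ and $A_{2n-s}(\alpha)$. Since $\alpha\in k^\times$, Lemma~\ref{lem:reciprocity-A} with $m=s$ gives
\begin{align*}
 A_s(\alpha)=\alpha^{n-s}A_{2n-s}(\alpha).
\end{align*}
Because $\alpha^{n-s}\neq0$, the condition $A_{2n-s}(\alpha)=0$ is equivalent to $A_s(\alpha)=0$, which yields the stated equivalence. There is no real obstacle here. The only step needing care is the congruence bookkeeping, namely checking in each residue class that exactly three entries survive and that their indices are $s$, $n$ and $2n-s$, since an off-by-one error there would break the reduction.
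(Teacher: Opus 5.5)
Your proposal is correct and matches the paper's proof. The paper likewise reduces the indices $ip-j$ modulo $4$ to obtain $M(\alpha)$ as a diagonal matrix (for $n$ even) or an antidiagonal matrix (for $n$ odd) with entries $A_s(\alpha)$, $A_n(\alpha)$, $A_{2n-s}(\alpha)$, then invokes Lemma~\ref{lem:reciprocity-A} to see that the two outer entries vanish simultaneously, while your explicit index checks in each residue class of $p$ modulo $4$ spell out the ``direct reduction'' the paper leaves to the reader.
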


\begin{proof}
Because only exponents divisible by $4$ occur in $f_\alpha(x)^n$, the parity of $n$ determines the sparsity pattern of $M(\alpha)$.
Since $s=\lfloor(2n+1)/4\rfloor$, a direct reduction of the indices $ip-j$ modulo $4$ gives
\begin{align*}
 M(\alpha)=
 \begin{cases}
 \begin{pmatrix}
  A_s(\alpha)&0&0\\
  0&A_n(\alpha)&0\\
  0&0&A_{2n-s}(\alpha)
 \end{pmatrix},& n\text{ even},\\[2.2em]
 \begin{pmatrix}
  0&0&A_s(\alpha)\\
  0&A_n(\alpha)&0\\
  A_{2n-s}(\alpha)&0&0
 \end{pmatrix},& n\text{ odd}.
 \end{cases}
\end{align*}
Lemma~\ref{lem:reciprocity-A} gives
\begin{align*}
 A_s(\alpha)=\alpha^{n-s}A_{2n-s}(\alpha).
\end{align*}
Since $\alpha\neq0$, the two outer entries vanish simultaneously.  In either parity, therefore, $M(\alpha)$ vanishes exactly when both $A_s(\alpha)$ and $A_n(\alpha)$ vanish.
\end{proof}

\begin{prop}
\label{prop:polynomial-criterion}
For $\alpha\in k\setminus\{0,1\}$, the following conditions are equivalent:
\begin{enumerate}
 \item the Cartier operator on $H^0(C_\alpha,\Omega^1_{C_\alpha})$ is zero;
 \item $\Phi_p(\alpha)=G_p(\alpha)=0$.
\end{enumerate}
\end{prop}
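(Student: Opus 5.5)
The plan is to reduce the statement to Lemma~\ref{lem:matrix-reduction} and then identify the two relevant coefficient polynomials $A_n(T)$ and $A_s(T)$ explicitly. The Cartier operator is represented by $B(\alpha)$, and $B(\alpha)=0$ if and only if $M(\alpha)=0$ because $\tau$ is injective. Since $\alpha\neq0$, Lemma~\ref{lem:matrix-reduction} makes condition (1) equivalent to $A_s(\alpha)=A_n(\alpha)=0$. It therefore suffices to show that, up to nonzero factors at $T=\alpha\neq 0$, one has $A_n=\Phi_p$ and $A_s=G_p$.

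To compute $A_m(T)$, expand the two factors of $g_T(z)^n=(z-1)^n(z-T)^n$ separately:
\begin{align*}
 (z-1)^n=\sum_{i=0}^n\binom{n}{i}(-1)^{n-i}z^i,\qquad
 (z-T)^n=\sum_{j=0}^n\binom{n}{j}(-T)^{n-j}z^j .
\end{align*}
Collecting the coefficient of $z^m$ gives
\begin{align*}
 A_m(T)=(-1)^{m}\sum_{i+j=m}\binom{n}{i}\binom{n}{j}T^{n-j},
\end{align*}
since $(-1)^{2n-m}=(-1)^m$.

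For $m=n$, put $j=n-i$ and use $\binom{n}{n-i}=\binom{n}{i}$. This gives $A_n(T)=(-1)^n\sum_i\binom{n}{i}^2T^i=(-1)^n\Phi_p(T)$.

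For $m=s$, note that $s\le n$, so the index $j=s-i$ ranges over $0\le j\le s$. Then
\begin{align*}
 A_s(T)=(-1)^sT^{n-s}\sum_{i=0}^{s}\binom{n}{i}\binom{n}{s-i}T^{i}=(-1)^sT^{n-s}G_p(T).
\end{align*}

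Evaluating at $\alpha\in k\setminus\{0,1\}$, the factors $(-1)^n$, $(-1)^s$ and $\alpha^{n-s}$ are all nonzero. Hence $A_n(\alpha)=0$ if and only if $\Phi_p(\alpha)=0$, and $A_s(\alpha)=0$ if and only if $G_p(\alpha)=0$, which proves the equivalence.

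The only delicate point is the index bookkeeping in the $m=s$ case. One must check that the power of $T$ factors out uniformly as $T^{n-s}$, and that the summation range is exactly $0\le i\le s$ with no truncation from $i,j\le n$; the latter holds because $s\le n$. Everything else is a direct application of Lemma~\ref{lem:matrix-reduction}.
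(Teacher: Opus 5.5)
Your proposal is correct and follows essentially the same route as the paper: you use $B(\alpha)=0\iff M(\alpha)=0$ and Lemma~\ref{lem:matrix-reduction} to reduce to $A_s(\alpha)=A_n(\alpha)=0$, then expand $(z-1)^n(z-\alpha)^n$ to get $A_n=(-1)^n\Phi_p$ and $A_s=(-1)^sT^{n-s}G_p$. Your coefficient computation, including the sign $(-1)^{2n-m}=(-1)^m$ and the factor $T^{n-s}$ justified by $s\le n$, matches the paper's formula $A_m(\alpha)=(-1)^m\alpha^{n-m}\sum_{i=0}^{m}\binom{n}{i}\binom{n}{m-i}\alpha^i$.
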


\begin{proof}
For $0\leq m\leq n$, expanding
$g_\alpha(z)^n=(z-1)^n(z-\alpha)^n$ gives
\begin{align*}
 A_m(\alpha)=(-1)^m\alpha^{n-m}
 \sum_{i=0}^{m}\binom{n}{i}\binom{n}{m-i}\alpha^i.
\end{align*}
At the two indices singled out by Lemma~\ref{lem:matrix-reduction}, this becomes
\begin{align*}
 A_n(\alpha)&=(-1)^n\Phi_p(\alpha),\\
 A_s(\alpha)&=(-1)^s\alpha^{n-s}G_p(\alpha).
\end{align*}
Because $\alpha\neq0$, Lemma~\ref{lem:matrix-reduction} yields
\begin{align*}
 M(\alpha)=0
 \quad\Longleftrightarrow\quad
 \Phi_p(\alpha)=G_p(\alpha)=0.
\end{align*}
Finally, $B(\alpha)=0$ if and only if $M(\alpha)=0$, so this is equivalent to the vanishing of the Cartier operator.
\end{proof}

\begin{proof}[Proof of Proposition~\ref{prop:superspecial-equivalence}]
$C_\alpha$ is superspecial if and only if the Cartier operator on $H^0(C_\alpha,\Omega^1_{C_\alpha})$ is zero (see \cite[Theorem~4.1]{Nygaard} and also \cite[Section~5.4]{AchterHowe}).
Proposition~\ref{prop:polynomial-criterion} identifies the latter condition with $\Phi_p(\alpha)=G_p(\alpha)=0$.
\end{proof}

\section{Main Theorem}\label{sec:main-theorem}

Section~\ref{sec:superspecial-criterion} shows that the common roots of $\Phi_p$ and $G_p$ are precisely the parameters for which $C_\alpha$ is superspecial.
We now use this interpretation to prove Theorem~\ref{thm:main}.
The argument has a simple structure.  First we identify the root locus of $D_p$ and obtain the general factorization statement.
Next we prove that a superspecial member of the family can occur only when $p\equiv1$ or $7\pmod8$.
The four assertions of the main theorem then follow by considering the residue class of $p$ modulo eight.

\subsection{The root locus}

The following proposition packages the geometric content of Proposition~\ref{prop:superspecial-equivalence} together with the standard arithmetic properties of the Deuring polynomial.
In particular, it proves Theorem~\ref{thm:main}(1).

\begin{prop}
\label{prop:root-locus}
For $\alpha\in k$, the following are equivalent:
\begin{enumerate}
 \item $D_p(\alpha)=0$;
 \item $\alpha\in\Fpsq\setminus\{0,1\}$ and $C_\alpha$ is superspecial.
\end{enumerate}
Moreover, every root of $D_p$ is simple.  Consequently, $D_p$ is squarefree,
and every nonconstant irreducible factor of $D_p$ over $\Fp$ has degree at
most $2$.
\end{prop}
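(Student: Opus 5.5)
Since $D_p=\gcd(\Phi_p,G_p)$ in $\Fp[x]$ and $k$ is algebraically closed, a point $\alpha\in k$ is a root of $D_p$ exactly when $\Phi_p(\alpha)=G_p(\alpha)=0$. The plan is to handle the two endpoints $\alpha=0,1$ directly, use Proposition~\ref{prop:superspecial-equivalence} for all other $\alpha$, and then import the classical properties of the Deuring polynomial.

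\emph{Step 1: excluding $0$ and $1$.} We have $\Phi_p(0)=\binom{n}{0}^2=1\neq0$. For $\alpha=1$, Vandermonde's identity gives $\Phi_p(1)=\sum_i\binom{n}{i}^2=\binom{2n}{n}=\binom{p-1}{n}$. Modulo $p$ this is $(-1)^n\neq0$. Hence neither $0$ nor $1$ is a root of $\Phi_p$, and therefore neither is a root of $D_p$.

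\emph{Step 2: the equivalence for $\alpha\neq0,1$.} By Proposition~\ref{prop:superspecial-equivalence}, $\alpha$ is a common root of $\Phi_p$ and $G_p$ if and only if $C_\alpha$ is superspecial. This proves (2)$\Rightarrow$(1) immediately. For (1)$\Rightarrow$(2) it remains to show $\alpha\in\Fpsq$. Since $\Phi_p(\alpha)=0$ and $\Phi_p$ is, up to the scalar $(-1)^n$, the Deuring polynomial, $\alpha$ is a supersingular Legendre parameter. The classical results cited in the introduction (\cite{Igusa}, \cite{AuerTop}) then say that the roots of $\Phi_p$ are distinct and lie in $\Fpsq$. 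This is the standard fact that the Legendre $\lambda$ of a supersingular curve lies in $\Fpsq$, proved via the Frobenius action on the $2$-torsion. So $\alpha\in\Fpsq\setminus\{0,1\}$.

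\emph{Step 3: simplicity and the degree bound.} $D_p$ divides $\Phi_p$, and $\Phi_p$ is squarefree by Igusa's theorem; hence every root of $D_p$ is simple and $D_p$ is squarefree. Every root of $D_p$ lies in $\Fpsq$, so each irreducible factor over $\Fp$ has a root generating a subfield of $\Fpsq$. Its degree therefore divides $2$.

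The only nontrivial input is the pair of Deuring-polynomial facts: distinct roots, all lying in $\Fpsq$. I would quote these rather than reprove them. If a self-contained argument were wanted, the simplicity of the roots would come from the hypergeometric differential equation satisfied by $\Phi_p$: a double root would force all derivatives to vanish, contradicting the fact that $\deg\Phi_p=n<p$.
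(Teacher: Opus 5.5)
Your proposal is correct and follows the paper's proof essentially step for step. You exclude $0$ and $1$ via $\Phi_p(0)=1$ and $\Phi_p(1)=\binom{p-1}{n}\equiv(-1)^n$, apply Proposition~\ref{prop:superspecial-equivalence} for $\alpha\neq0,1$, quote Auer--Top and Igusa for the facts that the roots of the Deuring polynomial $(-1)^n\Phi_p$ lie in $\Fpsq$ and are simple, and read off squarefreeness and the degree bound from $D_p\mid\Phi_p$, exactly as the paper does.
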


\begin{proof}
First,
\begin{align*}
 \Phi_p(0)=1.
\end{align*}
Vandermonde's identity also gives
\begin{align*}
 \Phi_p(1)
   =\sum_{i=0}^{n}\binom{n}{i}^2
   =\binom{2n}{n}
   =\binom{p-1}{n}
   \equiv(-1)^n\pmod p.
\end{align*}
Thus neither $0$ nor $1$ is a root of $D_p$.

Let $\alpha\in k\setminus\{0,1\}$.  By the definition of $D_p$ and
Proposition~\ref{prop:superspecial-equivalence},
\begin{align*}
 D_p(\alpha)=0
 \quad\Longleftrightarrow\quad
 \Phi_p(\alpha)=G_p(\alpha)=0
 \quad\Longleftrightarrow\quad
 C_\alpha\text{ is superspecial}.
\end{align*}
If these conditions hold, then $\alpha$ is a root of $\Phi_p$.  By
\cite[Proposition~2.2]{AuerTop}, every supersingular Legendre parameter belongs to $\Fpsq$.
The roots of the Deuring polynomial are simple; this classical separability property is proved in \cite{Igusa} and is recalled by Auer and
Top in the discussion following \cite[Proposition~3.1]{AuerTop}.
Since their Deuring polynomial is $H_p=(-1)^n\Phi_p$, this proves the equivalence and the simplicity assertion.
The final statement follows because every root of $D_p$ lies in $\Fpsq$.
\end{proof}

\subsection{The congruence obstruction}

We now establish the arithmetic restriction that drives the nonexistence part of the main theorem.
The proof begins with a superspecial parameter $\alpha$, places the corresponding curve in the standard form used by Ohashi \cite{Ohashi}, and compares two square conditions with the eighth-power condition of Auer--Top\cite{AuerTop}.

\begin{prop}
\label{prop:congruence-obstruction}
If $C_\alpha$ is superspecial for some
$\alpha\in k\setminus\{0,1\}$, then
\begin{align*}
 p\equiv1\quad\text{or}\quad7\pmod8.
\end{align*}
\end{prop}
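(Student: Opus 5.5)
The plan is to work over $\Fpsq$ and extract a quadratic-residue condition on $2$, since $p\equiv1$ or $7\pmod8$ is equivalent to $\bigl(\tfrac{2}{p}\bigr)=1$. Let $\alpha\in k\setminus\{0,1\}$ with $C_\alpha$ superspecial. By Proposition~\ref{prop:superspecial-equivalence} we have $\Phi_p(\alpha)=0$, so $\alpha$ is a supersingular Legendre parameter. By Proposition~\ref{prop:root-locus}, $\alpha\in\Fpsq$. I expect the argument to combine three inputs: the eighth-power property of supersingular Legendre parameters from Auer--Top, the standard-form statement \cite[Lemma~4.2]{Ohashi} for superspecial hyperelliptic curves over $\Fpsq$, and the explicit symmetry of $f_\alpha(x)=(x^4-1)(x^4-\alpha)$.

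First, I would invoke the Auer--Top result for $\alpha$: a supersingular Legendre parameter is an eighth power in $\Fpsq$, say $\alpha=\beta^8$ with $\beta\in\Fpsq^\times$. Second, since $C_\alpha$ is superspecial and defined over $\Fpsq$, I would use \cite[Lemma~4.2]{Ohashi} to place it in standard form. This means choosing a twist $y^2=c\,f_\alpha(x)$ with $c\in\Fpsq^\times$ on which the $p^2$-Frobenius acts on the Jacobian as $-p$, so that the model is $\Fpsq$-maximal or minimal as Ohashi normalizes it. The lemma should then force certain elements built from the branch data to be squares in $\Fpsq$. The natural candidates are the leading coefficient $c$ and differences of branch points, which must be compatible with the rationality of the automorphisms $x\mapsto ix$ and $x\mapsto \sqrt[4]{\alpha}/x$.

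The two square conditions would come from the two elliptic quotients. The first is $y^2=(u^2-1)(u^2-\alpha)$ with $u=x^2$. The second comes from $(xy)^2=u(u^2-1)(u^2-\alpha)$, or equivalently the quotient by $x\mapsto\beta^2/x$. Both are supersingular, because a superspecial Jacobian is isogenous to $E^3$ and its quotients inherit this. I would put each quotient into Legendre form. Their parameters are explicit cross ratios in $\beta$, for example $\lambda'=\bigl((1-\beta^4)/(1+\beta^4)\bigr)^2$ up to the usual $S_3$-orbit. Applying the Auer--Top eighth-power condition to $\lambda'$ and comparing it with the square conditions from the standard form should yield an identity whose consistency requires $\zeta_8+\zeta_8^{-1}=\sqrt{2}$ to lie in $\Fp$, that is, $\sqrt2\in\Fp$. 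Here $\zeta_8$ enters because the branch points $\pm1,\pm i,\pm\beta^2,\pm i\beta^2$ carry an action of $\mu_8$-type symmetry.

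The main obstacle is this final comparison step. One has to track precisely which quantities Ohashi's lemma declares to be squares, and how the Frobenius on $\Fpsq$ permutes the eighth roots of unity, so as to pin down the Legendre symbol $\bigl(\tfrac{2}{p}\bigr)$ rather than only $\bigl(\tfrac{-1}{p}\bigr)$. My first attempt would be to compute the norm map $\Fpsq^\times\to\Fp^\times$ on the element $1+i$ or $(1-\beta^4)/(1+\beta^4)$. The square/eighth-power constraints should then say that $N(1+i)=2$ is a square in $\Fp$, which is exactly the desired congruence $p\equiv\pm1\pmod 8$.
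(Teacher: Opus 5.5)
Your proposal has a genuine gap, and it starts with the first input. Auer--Top do not say that a supersingular Legendre parameter is an eighth power in $\Fpsq$. What they prove, and what the paper uses, is $-\alpha\in(\Fpsq^\times)^8$ (see \cite[Propositions~2.2 and~3.1]{AuerTop}). The version you state is false. For $p=3$ the only supersingular parameter is $\alpha=-1$, the root of $\Phi_3(x)=1+x$, and since $\mathbb{F}_9^\times$ is cyclic of order $8$, its only eighth power is $1$.

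In fact, ``$\alpha$ is an eighth power'' is exactly the step that needs the superspecial hypothesis. Once you have it, the proof is immediate. Together with $-\alpha\in(\Fpsq^\times)^8$ it gives $-1=w^8$ for some $w\in\Fpsq^\times$. Then $w$ has order $16$, so $16\mid p^2-1$, which means $p\equiv\pm1\pmod 8$. By assuming this at the outset you hid the only nontrivial step. The rest of your plan (twists with Frobenius $-p$, Legendre parameters of quotients, the norm of $1+i$) is speculation that you yourself flag as the main obstacle, and nothing in it forces $\bigl(\tfrac{2}{p}\bigr)=1$. Also, $(xy)^2=u(u^2-1)(u^2-\alpha)$ has degree $5$ in $u$, so it has genus $2$ and is not an elliptic quotient.

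The missing idea is the specific conclusion that \cite[Lemma~4.2]{Ohashi} provides:
\begin{itemize}
\item Using the correct Auer--Top statement, write $-\alpha=u^8$ and pick $\varepsilon\in\Fpsq$ with $\varepsilon^4=-1$; this exists since $8\mid p^2-1$. Then $\beta:=\varepsilon u^2\in\Fpsq$ satisfies $\beta^4=\alpha$.
\item The substitution $x=\gamma X$, $y=\beta^2Y$ with $\gamma^2=\beta$ turns $C_\alpha$ into the standard form $Y^2=(X^4-aX^2+1)(X^4+aX^2+1)$, where $a=\beta+\beta^{-1}\in\Fpsq$. Here $a\neq0,\pm2$ because $\alpha\neq1$.
\item For this superspecial standard form, Ohashi's lemma gives a square root of $a+2$ in $\Fpsq$.
\item Since $a+2=(\beta+1)^2/\beta$ with $\beta\neq-1$, this makes $\beta$ a square in $\Fpsq$. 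Hence $\alpha=\beta^4$ is an eighth power, and the order-$16$ argument above finishes.
\end{itemize}
You need no Legendre symbol of $2$, no norm computation, no maximality normalization of a twist, and no analysis of elliptic quotients.
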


\begin{proof}
By Proposition~\ref{prop:superspecial-equivalence}, one has
$\Phi_p(\alpha)=0$.  Thus $\alpha$ is a supersingular Legendre parameter.
Auer and Top prove that
\begin{align*}
 \alpha\in\Fpsq,
 \qquad
 -\alpha\in(\Fpsq^\times)^8;
\end{align*}
see \cite[Propositions~2.2 and~3.1]{AuerTop}.  Choose
$u\in\Fpsq^\times$ such that $u^8=-\alpha$.

Since $8\mid p^2-1$, the cyclic group $\Fpsq^\times$ contains an element
$\varepsilon$ satisfying $\varepsilon^4=-1$.  Put
\begin{align*}
 \beta:=\varepsilon u^2.
\end{align*}
Then $\beta\in\Fpsq^\times$ and $\beta^4=\alpha$.  Choose
$\gamma\in k$ with $\gamma^2=\beta$.  The change of variables
\begin{align*}
 x=\gamma X,
 \qquad
 y=\beta^2Y
\end{align*}
identifies $C_\alpha$ with
\begin{align*}
 Y^2
 &=X^8-(\beta^2+\beta^{-2})X^4+1\\
 &=(X^4-aX^2+1)(X^4+aX^2+1),
\end{align*}
where
\begin{align*}
 a:=\beta+\beta^{-1}\in\Fpsq.
\end{align*}
the two parameters of this standard form are
\begin{align*}
 a_0:=a,\qquad b_0:=-a.
\end{align*}
They belong to $\Fpsq$.
Moreover, the nonsingularity hypotheses in \cite[Lemma~4.2]{Ohashi} are satisfied. Indeed, $a=2$ or $a=-2$ would imply $\beta=1$ or $\beta=-1$, respectively, and hence $\alpha=\beta^4=1$.
If $a=0$, then $\beta^2=-1$, again giving $\alpha=1$. Thus $a_0,b_0\neq\pm2$ and $a_0\neq b_0$. Since the standard-form curve is superspecial, \cite[Lemma~4.2]{Ohashi} applies. In the notation of that lemma, the element $\alpha_+$ satisfying
\begin{align*}
 \alpha_+^2=a_0+2
\end{align*}
belongs to $\Fpsq$. Hence $a+2$ is a square in $\Fpsq$. Since
\begin{align*}
 a+2=\frac{(\beta+1)^2}{\beta}
\end{align*}
and $\beta\neq-1$ (otherwise $\alpha=1$), one has
$\alpha_+\neq0$ and
\begin{align*}
 \beta=\left(\frac{\beta+1}{\alpha_+}\right)^2.
\end{align*}
Thus $\beta$ is a square in $\Fpsq$. 
One has $\beta=v^2$ with $v\in\Fpsq^\times$.
Then
\begin{align*}
 \alpha=\beta^4=v^8.
\end{align*}
Together with $-\alpha=u^8$, this yields
\begin{align*}
 -1=(u/v)^8.
\end{align*}
Put $w:=u/v$.  Then $w^{16}=1$ but $w^8=-1\neq1$, so $w$ has order $16$.
Hence $16\mid p^2-1$.  For an odd prime $p$, this is equivalent to $p\equiv1$ or $7\pmod8$.
\end{proof}

\subsection{Proof of the main theorem}

We can now read off the four conclusions of Theorem~\ref{thm:main}.  Part~\textup{(1)} comes from the root locus. 
Part~\textup{(2)} uses the absence of $\Fp$-rational roots of $\Phi_p$ when $p\equiv1\pmod4$, 
Part~\textup{(3)} follows from the congruence obstruction, and Part~\textup{(4)} is obtained by evaluating the two polynomials at $x=-1$.

\begin{proof}[Proof of Theorem~\ref{thm:main}]
Part~\textup{(1)} is the final assertion of Proposition~\ref{prop:root-locus}.

For Part~\textup{(2)}, suppose that $p\equiv1\pmod8$.  By Part~\textup{(1)}, every irreducible factor of $D_p$ has degree at most $2$.  
Auer and Top \cite{AuerTop} prove that $\Phi_p$ has no $\Fp$-rational root when $p\equiv1\pmod4$ (see \cite[Proposition~3.2(a)]{AuerTop}).
Since $D_p$ divides $\Phi_p$, it has no linear factor.
Thus every nonconstant irreducible factor of $D_p$ is quadratic.

For Part~\textup{(3)}, assume that $p\equiv3$ or $5\pmod8$.  If $D_p$ were nonconstant, it would have a root $\alpha\in k$.
By Proposition~\ref{prop:root-locus}, the curve $C_\alpha$ would be superspecial, contradicting Proposition~\ref{prop:congruence-obstruction}.  Hence $D_p$ is constant, and its monic normalization gives $D_p=1$.

It remains to prove Part~\textup{(4)}.  The value
\begin{align*}
 \Phi_p(-1)=\sum_{i=0}^{n}(-1)^i\binom{n}{i}^2
\end{align*}
is the coefficient of $z^n$ in
\begin{align*}
 (1-z)^n(1+z)^n=(1-z^2)^n.
\end{align*}
It therefore vanishes when $n$ is odd.  Similarly,
\begin{align*}
 G_p(-1)=\sum_{i=0}^{s}(-1)^i\binom{n}{i}\binom{n}{s-i}
\end{align*}
is the coefficient of $z^s$ in $(1-z^2)^n$, and hence vanishes when $s$ is odd.
If $p\equiv7\pmod8$, write $p=8m+7$.
Then
\begin{align*}
 n=\frac{p-1}{2}=4m+3,
 \qquad
 s=\left\lfloor\frac p4\right\rfloor=2m+1,
\end{align*}
so both $n$ and $s$ are odd.
Thus $\Phi_p(-1)=G_p(-1)=0$, and therefore $x+1$ divides $D_p$.
\end{proof}

\begin{rem}\label{rem:scope}
Theorem~\ref{thm:main}(2) does not assert that $D_p$ is nonconstant: $D_p$ may equal $1$ even when $p\equiv1\pmod8$; for example, $D_{113}(x)=1$. 
If $p\equiv7\pmod8$, Theorem~\ref{thm:main}(1) shows that the factors besides $x+1$ are also distinct linear or irreducible quadratic factors.
Both types occur.
For $p=31$, all factors other than $x+1$ are linear, whereas for $p=151$ the remaining factors are irreducible quadratics for $p=47$, both types occur simultaneously (see Appendix~\ref{app:computations}).
\end{rem}

\section*{Acknowledgements}

The author is deeply grateful to Emeritus Professor Toshiyuki Katsura at The University of Tokyo for suggesting the problem that led to this research. The author also sincerely thanks Associate Professor Yasuhiro Wakabayashi at The University of Osaka for courteous comments, patient guidance, and continuous encouragement.

\appendix
\section{Factorization table for odd primes below 1000}\label{app:computations}

The following table gives the factorization of $D_p$ over $\Fp$ for every odd prime $p<1000$.
Every displayed nonconstant factor is monic and irreducible in $\Fp[x]$.
Each coefficient is represented by the unique integer in $\{0,1,\ldots,p-1\}$.
In particular, the table makes the squarefreeness and the degree bound in Theorem~\ref{thm:main}(1) directly visible.
The table is not used in any proof above.

\begingroup
\scriptsize
\renewcommand{\arraystretch}{1.16}
\setlength{\tabcolsep}{3.5pt}
\begin{longtable}{r r r P{0.73\textwidth}}
\caption{Factorization of $\Dp$ over $\Fp$ for all odd primes $p<1000$.}
\label{tab:Dp}\\
\toprule
$p$ & $p\bmod 8$ & $\deg \Dp(x)$ & Factorization of $\Dp(x)$ over $\Fp$\\
\midrule
\endfirsthead
\toprule
$p$ & $p\bmod 8$ & $\deg \Dp(x)$ & Factorization of $\Dp(x)$ over $\Fp$\\
\midrule
\endhead
\midrule
\multicolumn{4}{r}{Continued on next page}\\
\endfoot
\bottomrule
\endlastfoot
3 & 3 & 0 & $ 1 $ \\
5 & 5 & 0 & $ 1 $ \\
7 & 7 & 1 & $ \bigl(x + 1\bigr) $ \\
11 & 3 & 0 & $ 1 $ \\
13 & 5 & 0 & $ 1 $ \\
17 & 1 & 2 & $ \bigl(x^{2} + 14x + 1\bigr) $ \\
19 & 3 & 0 & $ 1 $ \\
23 & 7 & 1 & $ \bigl(x + 1\bigr) $ \\
29 & 5 & 0 & $ 1 $ \\
31 & 7 & 3 & $ \bigl(x + 1\bigr)\allowbreak\, \bigl(x + 17\bigr)\allowbreak\, \bigl(x + 11\bigr) $ \\
37 & 5 & 0 & $ 1 $ \\
41 & 1 & 2 & $ \bigl(x^{2} + 40x + 1\bigr) $ \\
43 & 3 & 0 & $ 1 $ \\
47 & 7 & 7 & $ \bigl(x + 1\bigr)\allowbreak\, \bigl(x + 26\bigr)\allowbreak\, \bigl(x + 30\bigr)\allowbreak\, \bigl(x + 38\bigr)\allowbreak\, \bigl(x + 11\bigr)\allowbreak\, \bigl(x^{2} + 46x + 1\bigr) $ \\
53 & 5 & 0 & $ 1 $ \\
59 & 3 & 0 & $ 1 $ \\
61 & 5 & 0 & $ 1 $ \\
67 & 3 & 0 & $ 1 $ \\
71 & 7 & 1 & $ \bigl(x + 1\bigr) $ \\
73 & 1 & 6 & $ \bigl(x^{2} + 43x + 37\bigr)\allowbreak\, \bigl(x^{2} + 13x + 2\bigr)\allowbreak\, \bigl(x^{2} + 35x + 1\bigr) $ \\
79 & 7 & 3 & $ \bigl(x + 1\bigr)\allowbreak\, \bigl(x + 77\bigr)\allowbreak\, \bigl(x + 39\bigr) $ \\
83 & 3 & 0 & $ 1 $ \\
89 & 1 & 2 & $ \bigl(x^{2} + 76x + 1\bigr) $ \\
97 & 1 & 2 & $ \bigl(x^{2} + 89x + 1\bigr) $ \\
101 & 5 & 0 & $ 1 $ \\
103 & 7 & 1 & $ \bigl(x + 1\bigr) $ \\
107 & 3 & 0 & $ 1 $ \\
109 & 5 & 0 & $ 1 $ \\
113 & 1 & 0 & $ 1 $ \\
127 & 7 & 1 & $ \bigl(x + 1\bigr) $ \\
131 & 3 & 0 & $ 1 $ \\
137 & 1 & 2 & $ \bigl(x^{2} + 86x + 1\bigr) $ \\
139 & 3 & 0 & $ 1 $ \\
149 & 5 & 0 & $ 1 $ \\
151 & 7 & 5 & $ \bigl(x + 1\bigr)\allowbreak\, \bigl(x^{2} + 115x + 84\bigr)\allowbreak\, \bigl(x^{2} + 129x + 9\bigr) $ \\
157 & 5 & 0 & $ 1 $ \\
163 & 3 & 0 & $ 1 $ \\
167 & 7 & 7 & $ \bigl(x + 1\bigr)\allowbreak\, \bigl(x + 32\bigr)\allowbreak\, \bigl(x + 47\bigr)\allowbreak\, \bigl(x^{2} + 17x + 1\bigr)\allowbreak\, \bigl(x^{2} + 19x + 1\bigr) $ \\
173 & 5 & 0 & $ 1 $ \\
179 & 3 & 0 & $ 1 $ \\
181 & 5 & 0 & $ 1 $ \\
191 & 7 & 13 & $ \bigl(x + 1\bigr)\allowbreak\, \bigl(x + 101\bigr)\allowbreak\, \bigl(x + 106\bigr)\allowbreak\, \bigl(x + 126\bigr)\allowbreak\, \bigl(x + 141\bigr)\allowbreak\, \bigl(x + 182\bigr)\allowbreak\, \bigl(x + 42\bigr)\allowbreak\, \bigl(x + 47\bigr)\allowbreak\, \bigl(x + 87\bigr)\allowbreak\, \bigl(x^{2} + 167x + 1\bigr)\allowbreak\, \bigl(x^{2} + x + 1\bigr) $ \\
193 & 1 & 2 & $ \bigl(x^{2} + 184x + 1\bigr) $ \\
197 & 5 & 0 & $ 1 $ \\
199 & 7 & 3 & $ \bigl(x + 1\bigr)\allowbreak\, \bigl(x + 7\bigr)\allowbreak\, \bigl(x + 57\bigr) $ \\
211 & 3 & 0 & $ 1 $ \\
223 & 7 & 3 & $ \bigl(x + 1\bigr)\allowbreak\, \bigl(x + 163\bigr)\allowbreak\, \bigl(x + 26\bigr) $ \\
227 & 3 & 0 & $ 1 $ \\
229 & 5 & 0 & $ 1 $ \\
233 & 1 & 0 & $ 1 $ \\
239 & 7 & 7 & $ \bigl(x + 1\bigr)\allowbreak\, \bigl(x + 178\bigr)\allowbreak\, \bigl(x + 47\bigr)\allowbreak\, \bigl(x^{2} + 238x + 1\bigr)\allowbreak\, \bigl(x^{2} + 25x + 1\bigr) $ \\
241 & 1 & 0 & $ 1 $ \\
251 & 3 & 0 & $ 1 $ \\
257 & 1 & 10 & $ \bigl(x^{2} + 254x + 1\bigr)\allowbreak\, \bigl(x^{2} + 256x + 16\bigr)\allowbreak\, \bigl(x^{2} + 16x + 241\bigr)\allowbreak\, \bigl(x^{2} + 63x + 129\bigr)\allowbreak\, \bigl(x^{2} + 126x + 2\bigr) $ \\
263 & 7 & 5 & $ \bigl(x + 1\bigr)\allowbreak\, \bigl(x^{2} + 216x + 1\bigr)\allowbreak\, \bigl(x^{2} + 231x + 1\bigr) $ \\
269 & 5 & 0 & $ 1 $ \\
271 & 7 & 7 & $ \bigl(x + 1\bigr)\allowbreak\, \bigl(x + 244\bigr)\allowbreak\, \bigl(x + 10\bigr)\allowbreak\, \bigl(x + 26\bigr)\allowbreak\, \bigl(x + 73\bigr)\allowbreak\, \bigl(x^{2} + 15x + 1\bigr) $ \\
277 & 5 & 0 & $ 1 $ \\
281 & 1 & 0 & $ 1 $ \\
283 & 3 & 0 & $ 1 $ \\
293 & 5 & 0 & $ 1 $ \\
307 & 3 & 0 & $ 1 $ \\
311 & 7 & 7 & $ \bigl(x + 1\bigr)\allowbreak\, \bigl(x^{2} + 176x + 1\bigr)\allowbreak\, \bigl(x^{2} + 71x + 1\bigr)\allowbreak\, \bigl(x^{2} + 73x + 1\bigr) $ \\
313 & 1 & 4 & $ \bigl(x^{2} + 278x + 137\bigr)\allowbreak\, \bigl(x^{2} + 66x + 16\bigr) $ \\
317 & 5 & 0 & $ 1 $ \\
331 & 3 & 0 & $ 1 $ \\
337 & 1 & 6 & $ \bigl(x^{2} + 199x + 8\bigr)\allowbreak\, \bigl(x^{2} + 206x + 1\bigr)\allowbreak\, \bigl(x^{2} + 67x + 295\bigr) $ \\
347 & 3 & 0 & $ 1 $ \\
349 & 5 & 0 & $ 1 $ \\
353 & 1 & 8 & $ \bigl(x^{2} + 335x + 1\bigr)\allowbreak\, \bigl(x^{2} + 350x + 1\bigr)\allowbreak\, \bigl(x^{2} + 36x + 131\bigr)\allowbreak\, \bigl(x^{2} + 38x + 256\bigr) $ \\
359 & 7 & 7 & $ \bigl(x + 1\bigr)\allowbreak\, \bigl(x + 50\bigr)\allowbreak\, \bigl(x + 79\bigr)\allowbreak\, \bigl(x^{2} + 292x + 1\bigr)\allowbreak\, \bigl(x^{2} + 295x + 1\bigr) $ \\
367 & 7 & 3 & $ \bigl(x + 1\bigr)\allowbreak\, \bigl(x + 291\bigr)\allowbreak\, \bigl(x + 169\bigr) $ \\
373 & 5 & 0 & $ 1 $ \\
379 & 3 & 0 & $ 1 $ \\
383 & 7 & 13 & $ \bigl(x + 1\bigr)\allowbreak\, \bigl(x + 245\bigr)\allowbreak\, \bigl(x + 247\bigr)\allowbreak\, \bigl(x + 254\bigr)\allowbreak\, \bigl(x + 374\bigr)\allowbreak\, \bigl(x + 381\bigr)\allowbreak\, \bigl(x + 85\bigr)\allowbreak\, \bigl(x + 95\bigr)\allowbreak\, \bigl(x + 191\bigr)\allowbreak\, \bigl(x^{2} + 236x + 1\bigr)\allowbreak\, \bigl(x^{2} + 246x + 1\bigr) $ \\
389 & 5 & 0 & $ 1 $ \\
397 & 5 & 0 & $ 1 $ \\
401 & 1 & 4 & $ \bigl(x^{2} + 227x + 318\bigr)\allowbreak\, \bigl(x^{2} + 234x + 372\bigr) $ \\
409 & 1 & 2 & $ \bigl(x^{2} + 270x + 1\bigr) $ \\
419 & 3 & 0 & $ 1 $ \\
421 & 5 & 0 & $ 1 $ \\
431 & 7 & 11 & $ \bigl(x + 1\bigr)\allowbreak\, \bigl(x + 219\bigr)\allowbreak\, \bigl(x + 112\bigr)\allowbreak\, \bigl(x + 127\bigr)\allowbreak\, \bigl(x + 148\bigr)\allowbreak\, \bigl(x + 166\bigr)\allowbreak\, \bigl(x + 185\bigr)\allowbreak\, \bigl(x^{2} + 295x + 1\bigr)\allowbreak\, \bigl(x^{2} + 430x + 1\bigr) $ \\
433 & 1 & 10 & $ \bigl(x^{2} + 328x + 78\bigr)\allowbreak\, \bigl(x^{2} + 355x + 51\bigr)\allowbreak\, \bigl(x^{2} + 406x + 17\bigr)\allowbreak\, \bigl(x^{2} + 415x + 161\bigr)\allowbreak\, \bigl(x^{2} + 430x + 1\bigr) $ \\
439 & 7 & 13 & $ \bigl(x + 1\bigr)\allowbreak\, \bigl(x + 10\bigr)\allowbreak\, \bigl(x + 44\bigr)\allowbreak\, \bigl(x + 109\bigr)\allowbreak\, \bigl(x + 145\bigr)\allowbreak\, \bigl(x^{2} + 245x + 1\bigr)\allowbreak\, \bigl(x^{2} + 419x + 286\bigr)\allowbreak\, \bigl(x^{2} + 3x + 373\bigr)\allowbreak\, \bigl(x^{2} + 5x + 1\bigr) $ \\
443 & 3 & 0 & $ 1 $ \\
449 & 1 & 4 & $ \bigl(x^{2} + 418x + 424\bigr)\allowbreak\, \bigl(x^{2} + 109x + 431\bigr) $ \\
457 & 1 & 4 & $ \bigl(x^{2} + 424x + 262\bigr)\allowbreak\, \bigl(x^{2} + 190x + 382\bigr) $ \\
461 & 5 & 0 & $ 1 $ \\
463 & 7 & 1 & $ \bigl(x + 1\bigr) $ \\
467 & 3 & 0 & $ 1 $ \\
479 & 7 & 11 & $ \bigl(x + 1\bigr)\allowbreak\, \bigl(x + 254\bigr)\allowbreak\, \bigl(x + 265\bigr)\allowbreak\, \bigl(x + 311\bigr)\allowbreak\, \bigl(x + 342\bigr)\allowbreak\, \bigl(x + 413\bigr)\allowbreak\, \bigl(x + 449\bigr)\allowbreak\, \bigl(x + 463\bigr)\allowbreak\, \bigl(x + 472\bigr)\allowbreak\, \bigl(x + 47\bigr)\allowbreak\, \bigl(x + 134\bigr) $ \\
487 & 7 & 1 & $ \bigl(x + 1\bigr) $ \\
491 & 3 & 0 & $ 1 $ \\
499 & 3 & 0 & $ 1 $ \\
503 & 7 & 1 & $ \bigl(x + 1\bigr) $ \\
509 & 5 & 0 & $ 1 $ \\
521 & 1 & 0 & $ 1 $ \\
523 & 3 & 0 & $ 1 $ \\
541 & 5 & 0 & $ 1 $ \\
547 & 3 & 0 & $ 1 $ \\
557 & 5 & 0 & $ 1 $ \\
563 & 3 & 0 & $ 1 $ \\
569 & 1 & 8 & $ \bigl(x^{2} + 294x + 399\bigr)\allowbreak\, \bigl(x^{2} + 416x + 153\bigr)\allowbreak\, \bigl(x^{2} + 420x + 164\bigr)\allowbreak\, \bigl(x^{2} + 568x + 450\bigr) $ \\
571 & 3 & 0 & $ 1 $ \\
577 & 1 & 6 & $ \bigl(x^{2} + 521x + 24\bigr)\allowbreak\, \bigl(x^{2} + 539x + 1\bigr)\allowbreak\, \bigl(x^{2} + 190x + 553\bigr) $ \\
587 & 3 & 0 & $ 1 $ \\
593 & 1 & 6 & $ \bigl(x^{2} + 568x + 1\bigr)\allowbreak\, \bigl(x^{2} + 577x + 16\bigr)\allowbreak\, \bigl(x^{2} + 592x + 556\bigr) $ \\
599 & 7 & 9 & $ \bigl(x + 1\bigr)\allowbreak\, \bigl(x + 511\bigr)\allowbreak\, \bigl(x + 211\bigr)\allowbreak\, \bigl(x^{2} + 333x + 1\bigr)\allowbreak\, \bigl(x^{2} + 406x + 1\bigr)\allowbreak\, \bigl(x^{2} + 121x + 1\bigr) $ \\
601 & 1 & 4 & $ \bigl(x^{2} + 449x + 453\bigr)\allowbreak\, \bigl(x^{2} + 66x + 134\bigr) $ \\
607 & 7 & 15 & $ \bigl(x + 1\bigr)\allowbreak\, \bigl(x + 348\bigr)\allowbreak\, \bigl(x + 385\bigr)\allowbreak\, \bigl(x + 390\bigr)\allowbreak\, \bigl(x + 400\bigr)\allowbreak\, \bigl(x + 532\bigr)\allowbreak\, \bigl(x + 605\bigr)\allowbreak\, \bigl(x + 216\bigr)\allowbreak\, \bigl(x + 303\bigr)\allowbreak\, \bigl(x^{2} + 466x + 1\bigr)\allowbreak\, \bigl(x^{2} + 164x + 60\bigr)\allowbreak\, \bigl(x^{2} + 286x + 172\bigr) $ \\
613 & 5 & 0 & $ 1 $ \\
617 & 1 & 0 & $ 1 $ \\
619 & 3 & 0 & $ 1 $ \\
631 & 7 & 7 & $ \bigl(x + 1\bigr)\allowbreak\, \bigl(x + 31\bigr)\allowbreak\, \bigl(x + 285\bigr)\allowbreak\, \bigl(x^{2} + 348x + 496\bigr)\allowbreak\, \bigl(x^{2} + 362x + 215\bigr) $ \\
641 & 1 & 0 & $ 1 $ \\
643 & 3 & 0 & $ 1 $ \\
647 & 7 & 3 & $ \bigl(x + 1\bigr)\allowbreak\, \bigl(x^{2} + 165x + 1\bigr) $ \\
653 & 5 & 0 & $ 1 $ \\
659 & 3 & 0 & $ 1 $ \\
661 & 5 & 0 & $ 1 $ \\
673 & 1 & 4 & $ \bigl(x^{2} + 358x + 256\bigr)\allowbreak\, \bigl(x^{2} + 238x + 418\bigr) $ \\
677 & 5 & 0 & $ 1 $ \\
683 & 3 & 0 & $ 1 $ \\
691 & 3 & 0 & $ 1 $ \\
701 & 5 & 0 & $ 1 $ \\
709 & 5 & 0 & $ 1 $ \\
719 & 7 & 17 & $ \bigl(x + 1\bigr)\allowbreak\, \bigl(x + 373\bigr)\allowbreak\, \bigl(x + 603\bigr)\allowbreak\, \bigl(x + 611\bigr)\allowbreak\, \bigl(x + 641\bigr)\allowbreak\, \bigl(x + 667\bigr)\allowbreak\, \bigl(x + 688\bigr)\allowbreak\, \bigl(x + 212\bigr)\allowbreak\, \bigl(x + 233\bigr)\allowbreak\, \bigl(x + 293\bigr)\allowbreak\, \bigl(x + 318\bigr)\allowbreak\, \bigl(x^{2} + 438x + 652\bigr)\allowbreak\, \bigl(x^{2} + 565x + 1\bigr)\allowbreak\, \bigl(x^{2} + 691x + 279\bigr) $ \\
727 & 7 & 7 & $ \bigl(x + 1\bigr)\allowbreak\, \bigl(x^{2} + 675x + 157\bigr)\allowbreak\, \bigl(x^{2} + 31x + 1\bigr)\allowbreak\, \bigl(x^{2} + 171x + 514\bigr) $ \\
733 & 5 & 0 & $ 1 $ \\
739 & 3 & 0 & $ 1 $ \\
743 & 7 & 5 & $ \bigl(x + 1\bigr)\allowbreak\, \bigl(x^{2} + 679x + 539\bigr)\allowbreak\, \bigl(x^{2} + 146x + 346\bigr) $ \\
751 & 7 & 1 & $ \bigl(x + 1\bigr) $ \\
757 & 5 & 0 & $ 1 $ \\
761 & 1 & 10 & $ \bigl(x^{2} + 760x + 728\bigr)\allowbreak\, \bigl(x^{2} + 760x + 132\bigr)\allowbreak\, \bigl(x^{2} + 98x + 663\bigr)\allowbreak\, \bigl(x^{2} + 355x + 1\bigr)\allowbreak\, \bigl(x^{2} + 369x + 392\bigr) $ \\
769 & 1 & 2 & $ \bigl(x^{2} + 450x + 1\bigr) $ \\
773 & 5 & 0 & $ 1 $ \\
787 & 3 & 0 & $ 1 $ \\
797 & 5 & 0 & $ 1 $ \\
809 & 1 & 4 & $ \bigl(x^{2} + 475x + 349\bigr)\allowbreak\, \bigl(x^{2} + 764x + 51\bigr) $ \\
811 & 3 & 0 & $ 1 $ \\
821 & 5 & 0 & $ 1 $ \\
823 & 7 & 5 & $ \bigl(x + 1\bigr)\allowbreak\, \bigl(x^{2} + 422x + 1\bigr)\allowbreak\, \bigl(x^{2} + 437x + 1\bigr) $ \\
827 & 3 & 0 & $ 1 $ \\
829 & 5 & 0 & $ 1 $ \\
839 & 7 & 13 & $ \bigl(x + 1\bigr)\allowbreak\, \bigl(x^{2} + 486x + 210\bigr)\allowbreak\, \bigl(x^{2} + 605x + 1\bigr)\allowbreak\, \bigl(x^{2} + 13x + 1\bigr)\allowbreak\, \bigl(x^{2} + 219x + 1\bigr)\allowbreak\, \bigl(x^{2} + 264x + 1\bigr)\allowbreak\, \bigl(x^{2} + 266x + 4\bigr) $ \\
853 & 5 & 0 & $ 1 $ \\
857 & 1 & 14 & $ \bigl(x^{2} + 599x + 377\bigr)\allowbreak\, \bigl(x^{2} + 653x + 323\bigr)\allowbreak\, \bigl(x^{2} + 654x + 416\bigr)\allowbreak\, \bigl(x^{2} + 783x + 1\bigr)\allowbreak\, \bigl(x^{2} + 856x + 488\bigr)\allowbreak\, \bigl(x^{2} + 72x + 785\bigr)\allowbreak\, \bigl(x^{2} + 270x + 528\bigr) $ \\
859 & 3 & 0 & $ 1 $ \\
863 & 7 & 11 & $ \bigl(x + 1\bigr)\allowbreak\, \bigl(x + 439\bigr)\allowbreak\, \bigl(x + 480\bigr)\allowbreak\, \bigl(x + 633\bigr)\allowbreak\, \bigl(x + 768\bigr)\allowbreak\, \bigl(x + 10\bigr)\allowbreak\, \bigl(x + 109\bigr)\allowbreak\, \bigl(x + 259\bigr)\allowbreak\, \bigl(x + 347\bigr)\allowbreak\, \bigl(x^{2} + 783x + 1\bigr) $ \\
877 & 5 & 0 & $ 1 $ \\
881 & 1 & 6 & $ \bigl(x^{2} + 727x + 333\bigr)\allowbreak\, \bigl(x^{2} + 235x + 336\bigr)\allowbreak\, \bigl(x^{2} + 254x + 1\bigr) $ \\
883 & 3 & 0 & $ 1 $ \\
887 & 7 & 1 & $ \bigl(x + 1\bigr) $ \\
907 & 3 & 0 & $ 1 $ \\
911 & 7 & 11 & $ \bigl(x + 1\bigr)\allowbreak\, \bigl(x + 483\bigr)\allowbreak\, \bigl(x + 784\bigr)\allowbreak\, \bigl(x + 221\bigr)\allowbreak\, \bigl(x + 371\bigr)\allowbreak\, \bigl(x + 373\bigr)\allowbreak\, \bigl(x + 381\bigr)\allowbreak\, \bigl(x^{2} + 707x + 1\bigr)\allowbreak\, \bigl(x^{2} + 226x + 1\bigr) $ \\
919 & 7 & 1 & $ \bigl(x + 1\bigr) $ \\
929 & 1 & 0 & $ 1 $ \\
937 & 1 & 6 & $ \bigl(x^{2} + 65x + 256\bigr)\allowbreak\, \bigl(x^{2} + 143x + 377\bigr)\allowbreak\, \bigl(x^{2} + 242x + 1\bigr) $ \\
941 & 5 & 0 & $ 1 $ \\
947 & 3 & 0 & $ 1 $ \\
953 & 1 & 10 & $ \bigl(x^{2} + 481x + 1\bigr)\allowbreak\, \bigl(x^{2} + 586x + 872\bigr)\allowbreak\, \bigl(x^{2} + 679x + 209\bigr)\allowbreak\, \bigl(x^{2} + 934x + 200\bigr)\allowbreak\, \bigl(x^{2} + 213x + 114\bigr) $ \\
967 & 7 & 3 & $ \bigl(x + 1\bigr)\allowbreak\, \bigl(x + 544\bigr)\allowbreak\, \bigl(x + 16\bigr) $ \\
971 & 3 & 0 & $ 1 $ \\
977 & 1 & 0 & $ 1 $ \\
983 & 7 & 5 & $ \bigl(x + 1\bigr)\allowbreak\, \bigl(x + 85\bigr)\allowbreak\, \bigl(x + 266\bigr)\allowbreak\, \bigl(x^{2} + 90x + 1\bigr) $ \\
991 & 7 & 3 & $ \bigl(x + 1\bigr)\allowbreak\, \bigl(x + 976\bigr)\allowbreak\, \bigl(x + 66\bigr) $ \\
997 & 5 & 0 & $ 1 $ \\
\end{longtable}
\endgroup


\begin{thebibliography}{vdGvdV}

\bibitem[1]{AchterHowe}
J. D. Achter and E. W. Howe,
\textit{Hasse--Witt and Cartier--Manin matrices: A warning and a request},
in \textit{Arithmetic Geometry: Computation and Applications},
Contemp. Math. 722, Amer. Math. Soc., Providence, RI, 2019, pp. 1--18.

\bibitem[2]{AuerTop}
R. Auer and J. Top,
\textit{Legendre elliptic curves over finite fields},
J. Number Theory 95 (2002), no. 2, 303--312.


\bibitem[3]{IbukiyamaKatsuraOort}
T. Ibukiyama, T. Katsura, and F. Oort,
\textit{Supersingular curves of genus two and class numbers},
Compositio Math. 57 (1986), no. 2, 127--152.

\bibitem[4]{Igusa}
J.-I. Igusa,
\textit{On the algebraic theory of elliptic modular functions},
J. Math. Soc. Japan 20 (1968), no. 1--2, 96--106.

\bibitem[5]{KatsuraOort}
T. Katsura and F. Oort,
\textit{Families of supersingular abelian surfaces},
Compositio Math. 62 (1987), no. 2, 107--167.

\bibitem[6]{Nygaard}
N. O. Nygaard,
\textit{Slopes of powers of Frobenius on crystalline cohomology},
Ann. Sci. \'Ecole Norm. Sup. (4) 14 (1981), no. 4, 369--401.

\bibitem[7]{Ohashi}
R. Ohashi,
\textit{On the maximality of hyperelliptic Howe curves of genus 3},
Kodai Math. J. 45 (2022), no. 2, 282--294.

\bibitem[8]{OhashiKudo}
R. Ohashi and M. Kudo,
\textit{Computing superspecial hyperelliptic curves of genus $4$ with
automorphism group properly containing the Klein $4$-group},
J. Comput. Algebra 11 (2024), Paper No. 100020.

\bibitem[9]{Oort}
F. Oort,
\textit{Which abelian surfaces are products of elliptic curves?},
Math. Ann. 214 (1975), 35--47.

\bibitem[10]{GeerVlugt}
G. van der Geer and M. van der Vlugt,
\textit{Reed--Muller codes and supersingular curves. I},
Compositio Math. 84 (1992), no. 3, 333--367.

\end{thebibliography}
\end{document}